\newtheorem{thm}{Theorem}[section]
\newtheorem{prop}[thm]{Proposition}
\newtheorem{lem}[thm]{Lemma}
\theoremstyle{definition}
\newtheorem{defn}[thm]{Definition}
\theoremstyle{remark}
\crefname{thm}{theorem}{theorems}
\crefname{lemma}{lemma}{lemmas}
\let\c@equation\c@thm
\numberwithin{equation}{section}
\title{A P-local Delooping Machine}
\author{Matthew Sartwell}
\begin{document}

\maketitle

\begin{abstract}
We show that for spaces $A$ that satisfy a certain smallness condition, there
is a Lawvere theory $T_A$ so that a space $X$ has the structure of a
$T_A$-algebra if and only if $X$ is weakly equivalent to a mapping space out of
$A$. In particular, spheres localized at a set of primes satisfy this condition.
\end{abstract}

\section{Introduction}
It is a classical result that the existence of certain algebraic structures on
a space can determine whether or not it is an $n$-fold loop space
(where here $n$ is possibly $\infty$). For instance, Beck \cite{Beck} showed
that a space is an $n$-fold loop space if and only if it is an algebra over the
monad $\Omega^n \Sigma^n$. In \cite{May} May found a much simpler description
using monads which come from operads. Such monads have the useful property that
they are \textit{finitary}: completely determined by their restriction to finite
sets. Finitary monads and their algebras correspond to (Lawvere) theories and
their algebras \cite[II.4]{BV}.  

\begin{defn}\label{detectable}
A \textbf{theory} $T$ is a based simplicial category with objects $t_0,
t_1, t_2, ...$ such that $t_i$ is the $i$-fold product of $t_1$. In particular,
$t_0$ is the basepoint. An \textbf{algebra} over $T$ is a based, product
preserving functor $X$ from $T$ into based simplicial sets. The
\textbf{underlying space} of $X$ is $X(t_1)$.
\end{defn}

So an algebra over a theory consists of a space $X(t_1)$ along with operations
$X(t_1)^n \rightarrow X(t_1)$ that satisfy certain relations which are
parameterized by the theory $T$. The above discussion is saying that loop spaces
are detected by theories, in the following sense.

\begin{defn}
A space $A$ is \textbf{detectable} if there is a theory $T$ with the property
that a space $X$ is weakly equivalent to the underlying space of a $T$-algebra
if and only if $X$ is weakly equivalent to $\mathbf{Map_*}(A, Y)$ for some $Y$. 
\end{defn}

One of the upshots of a space being detectable is that algebras over a theory
are closed under various operations. For example, let $F$ be a functor from
spaces to spaces which preserves weak equivalences and preserves products up to
weak equivalences. Then \cite[Cor 1.4]{Badzioch} says that if $X$ is weakly
equivalent to an algebra over a theory $T$, so is $F(X)$. So if $A$ is
detectable, applying such a functor $F$ to a mapping space of the form
$\mathbf{Map}_*(A, Y)$ gives a space which is weakly equivalent to
$\mathbf{Map}_*(A, Z)$ for some space $Z$.

In particular, if $A$ is detectable, then the localization $L_f\mathbf{Map}_*(A,
Y)$ with respect to any map $f$ is weakly equivalent to $\mathbf{Map}_*(A, Z)$
for some space $Z$. In \cite{mappingspaces}, it is shown that if $A$ is a
finite, pointed CW-complex with the property that its mapping spaces are closed
under localization in this way, then $A$ has the rational homotopy type of a
wedge of spheres which are all the same dimension.

Based on this result, it is not clear if \emph{any} spaces are detectable other
than wedges of $n$-spheres. On the contrary, we show that any space which
satisfies a certain smallness condition is detectable. In particular, this
includes spheres localized at a set of primes. Our main result is the following:

\begin{thm}\label{MainTheorem}
Let $S^n_P$ be the $P$-local sphere for $P$ a set of primes, and $n \geq 2$.
There is a theory $T_{S^n_P}$ with the property that a space $X$ is
weakly equivalent to $\mathbf{Map_*}(S^n_P, Y)$ for some $Y$ if and only if $X$
is weakly equivalent to the underlying space of an $T_{S^n_P}$-algebra.
\end{thm}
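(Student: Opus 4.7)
The plan is to follow the standard endomorphism-theory and bar-construction strategy for proving detectability. First I would define the theory $T_{S^n_P}$ to be the based simplicial category with objects $t_0, t_1, t_2, \dots$ and morphism spaces
\[
  T_{S^n_P}(t_j, t_i) \;=\; \mathbf{Map}_*\bigl(\textstyle\bigvee^{i} S^n_P,\; \bigvee^{j} S^n_P\bigr),
\]
with composition inherited from composition of pointed maps (in the opposite direction). The wedge--product adjunction for based mapping spaces shows that $t_i$ is the $i$-fold product of $t_1$, so $T_{S^n_P}$ is a theory in the sense of Definition~\ref{detectable}. The forward implication of the theorem is then essentially formal: for any pointed space $Y$, precomposition makes $\mathbf{Map}_*(S^n_P, Y)$ the underlying space of a $T_{S^n_P}$-algebra.

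For the reverse implication, given a $T_{S^n_P}$-algebra $X$ I would produce a candidate pointed space $Y(X)$ as the geometric realization of a two-sided bar construction built from $X$ using the free--forget adjunction associated to $T_{S^n_P}$. The algebra structure on $X$ then supplies a canonical comparison map
\[
  \eta_X \colon X \longrightarrow \mathbf{Map}_*(S^n_P,\, Y(X)),
\]
and the theorem reduces to showing that $\eta_X$ is always a weak equivalence, and that dually the counit $Y(\mathbf{Map}_*(S^n_P, Y)) \to Y$ induces a weak equivalence on $\mathbf{Map}_*(S^n_P, -)$. Both of these become formal consequences of the simplicial resolution formalism once one can identify the mapping space $\mathbf{Map}_*(S^n_P, Y(X))$ correctly.

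The heart of the argument, and the step where the smallness condition on $S^n_P$ does real work, is the interchange
\[
  \mathbf{Map}_*\bigl(S^n_P,\, |B_\bullet|\bigr) \;\simeq\; \bigl|\mathbf{Map}_*(S^n_P,\, B_\bullet)\bigr|
\]
of the mapping space with the geometric realization of the simplicial resolution $B_\bullet$. I would establish this by writing $S^n_P$ as a sequential homotopy colimit of copies of $S^n$ along degree-$k$ maps for a cofinal sequence of integers $k$ invertible in the $P$-local integers, so that $\mathbf{Map}_*(S^n_P, -)$ becomes a sequential homotopy limit of ordinary iterated loop functors $\Omega^n(-)$. Each finite stage commutes with geometric realization because $S^n$ is a compact CW complex, and the hypothesis $n \geq 2$ makes the resulting tower simply connected so that the relevant $\varprojlim^1$ terms vanish and the homotopy limit commutes with realization. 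This interchange is the main technical obstacle; once it is in hand, the proof that $\eta_X$ is a weak equivalence and the compatibility of the construction with the homotopy theory on both sides follows from standard bar-resolution arguments.
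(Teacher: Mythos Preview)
Your overall architecture---define $T_{S^n_P}$ as the endomorphism theory on finite wedges, observe that the forward implication is formal, and for the converse build $Y(X)$ as the realization of a bar resolution and reduce everything to the interchange
\[
\mathbf{Map}_*\bigl(S^n_P,\,|B_\bullet|\bigr)\;\simeq\;\bigl|\mathbf{Map}_*(S^n_P,\,B_\bullet)\bigr|
\]
---matches the paper exactly. The difference, and the place where your sketch has a genuine gap, is in how you justify this interchange.

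You propose to write $\mathbf{Map}_*(S^n_P,-)$ as a sequential homotopy \emph{limit} of copies of $\Omega^n(-)$ and then argue that this homotopy limit commutes with geometric realization because the tower is simply connected and $\varprojlim^1$ vanishes. That inference is not valid: vanishing of $\varprojlim^1$ controls the homotopy groups of a single homotopy limit via the Milnor sequence, but it does not by itself let you commute a sequential $\operatorname{holim}$ past a simplicial $\operatorname{hocolim}$. You would need an additional argument to pass from
\(\bigl|\operatorname{holim}_i \Omega^n B_\bullet\bigr|\)
to
\(\operatorname{holim}_i \bigl|\Omega^n B_\bullet\bigr|\),
and nothing in your outline supplies it. (Also, ``$S^n$ compact'' gives commutation of $\Omega^n$ with filtered colimits; for geometric realization one still needs a Bousfield--Friedlander-type hypothesis such as levelwise $(n-1)$-connectivity, which the paper invokes explicitly.)

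The paper's route around this is both simpler and avoids the holim entirely. It observes that the bar resolution takes values in $\mathbf{Cell}(S^n_P)$, i.e.\ in $(n-1)$-connected $P$-local spaces, and that for any $P$-local $K$ the localization map $S^n\to S^n_P$ induces a weak equivalence $\mathbf{Map}_*(S^n_P,K)\xrightarrow{\;\sim\;}\mathbf{Map}_*(S^n,K)$. Applying this both levelwise and to $|B_\bullet|$ (which is again $P$-local) reduces the interchange for $S^n_P$ directly to the known interchange for $S^n$. In your language, the tower of $\Omega^n$'s you wrote down is in fact a tower of weak equivalences once the target is $P$-local, so the homotopy limit is pro-constant and the commutation is trivial---but your $\varprojlim^1$ argument does not use or establish this, and that is the missing idea.
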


As in \cite{BCV}, we will actually prove the stronger statement that there is a
Quillen equivalence between the category of algebras over the theory $T_{S^n_P}$
and the right Bousfield localization of spaces with respect to $S^n_P$.\\

\textit{Notation}

\begin{itemize}
 \item We work in the category of pointed simplicial sets, which we
denote by $\mathbf{sSet}_*$.

\vspace{.3cm} 

\item For a set of primes $P$ and $n \geq 2$, the $P$-local sphere $S^n_P$ is
the singularization of the mapping telescope:

\begin{center}
 $\mathbf{Tel}(S^n \xrightarrow{l_1} S^n \xrightarrow{l_2} S^n \rightarrow...)$
\end{center}

\noindent
where $\{l_i\}$ are the maps whose degree are the positive integers relatively
prime to the primes in $P$.

\vspace{.3cm}

\item We freely use the language of model categories. For an introduction see
\cite{DS}.

\end{itemize}

\section*{Acknowledgements}
I wish to thank my advisor, Bernard Badzioch, for sparking my interest in
homotopy theory and algebraic structures, and for his patience and comments
through several versions of this paper. I also would like to thank Bruce
Corrigan-Salter and Alyson Bittner for many illuminating conversations.

\section{The Canonical Theory of a Space and the Homotopy Theory of Algebras}

The category of pointed simplicial sets is equipped with a pointed mapping
space given by:

\begin{center}
 $\mathbf{Map}_*(A, X) = \text{Hom}_{\mathbf{sSet}_*}(\Delta^\bullet_+ \wedge
A, X)$ 
\end{center}

\noindent
where the simplicial structure is encoded by the cosimplicial structure of
$\Delta^\bullet_+$, which is the pointed cosimplicial space sending each $[n]
\in \Delta$ to the standard $n$-simplex with a disjoint basepoint. For any
pointed simplicial set $A$, we define a theory $T_A$ by:

\begin{center}
 $\mathbf{Hom}_{T_A}(t_n, t_m) = \mathbf{Map}_*(\bigvee_m
A, \text{Sing}|\bigvee_n A|)$
\end{center}

\noindent
That this actually is a theory is probably easiest seen by the fact that the
adjunction between pointed simplicial sets and topological spaces induces
isomorphisms of simplicial mapping spaces:

\begin{center}
$\mathbf{Map}_*(\bigvee_m A, \text{Sing}|\bigvee_n A|) \cong
\mathbf{Map}_*(\bigvee_m |A|, \bigvee_n |A|)$ 
\end{center}

\noindent
Thus we may view our theory as the opposite of the full subcategory of the
(pointed simplicial) category of pointed topological spaces consisting of wedges
of copies of $|A|$. Since $\bigvee_n |A|$ is the $n$-fold coproduct of $|A|$, it
becomes the $n$-fold product in the opposite category.

\begin{defn}\label{Canonical}
Let $A$ be a space. The theory $T_A$ just described is called the
\textbf{Canonical Theory Associated to} $A$.
\end{defn}

In \cite{BCV}, it was shown that for $A = S^n$, the canonical theory detects
spheres in the sense of definition \ref{detectable}. Indeed, for any space $A$,
the mapping space $\mathbf{Map}_*(A, Y)$ is the underlying space of the
$T_A$ algebra:

\begin{center}
$\Omega^A(Y): T_A \rightarrow \mathbf{sSet}_*$ \hspace{2cm} $t_i \mapsto
\mathbf{Map_*}(\bigvee_i A, Y)$ 
\end{center}

\noindent
The content of \cite{BCV} was that, in the case of a sphere, every algebra over
the canonical theory $T_{S^n}$ has underlying space weakly equivalent to an
$n$-fold loop space.

We recall the homotopy theory of algebras over a theory $T$. Let
$\mathbf{Alg}^T$ denote the category of algebras over a theory $T$, where
the morphisms are natural transformations. Then taking the underlying space
gives a functor $U(X) = X(t_1)$ from the category of algebras over $T$ to
pointed simplicial sets. It has a left adjoint $F$, the free algebra functor,
and this adjunction is used to lift the Quillen model category of pointed
simplicial sets to the category of algebras.

\begin{thm}\cite{Reedy}\label{AlgMod}
There is a model category structure on $\mathbf{Alg}^{T_A}$, where a map $\phi:
X \rightarrow Y$ between algebras is a:

\begin{enumerate}
 \item Weak equivalence if $U(\phi)$ is a weak equivalence in $\mathbf{sSet}_*$
 \item Fibration if $U(\phi)$ is a fibration in $\mathbf{sSet}_*$
 \item Cofibration if $\phi$ has the left lifting property with respect to the
acyclic fibrations.
\end{enumerate}

\noindent
Moreover, with this model structure, the free/forgetful adjunction $F \dashv U$
is a Quillen adjunction.

\end{thm}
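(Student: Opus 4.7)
My plan is to lift the standard cofibrantly generated model structure on $\mathbf{sSet}_*$ to $\mathbf{Alg}^{T_A}$ along the adjunction $F \dashv U$, using the standard transfer theorem for cofibrantly generated model structures. Let $I = \{\partial \Delta^n_+ \hookrightarrow \Delta^n_+\}$ and $J = \{\Lambda^n_{k,+} \hookrightarrow \Delta^n_+\}$ denote the generating cofibrations and generating acyclic cofibrations of $\mathbf{sSet}_*$. The aim is to show that $F(I)$ and $F(J)$ generate a cofibrantly generated model structure on $\mathbf{Alg}^{T_A}$ in which $\phi$ is a fibration or weak equivalence precisely when $U(\phi)$ is.

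The first preliminary is that $\mathbf{Alg}^{T_A}$ is bicomplete and that $U$ preserves filtered colimits. Limits are computed levelwise in $\mathbf{sSet}_*$, since products in pointed simplicial sets commute with limits, and hence the product-preservation axiom for algebras is preserved by limits. For cocompleteness, I would view $T_A$-algebras as algebras over a finitary monad on $\mathbf{sSet}_*$ (finitary because $t_n$ is an $n$-fold product of $t_1$) and invoke the standard result that such algebra categories are cocomplete with $U$ preserving filtered colimits. This preservation in turn supplies the smallness required to run the small object argument against $F(I)$ and $F(J)$.

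The crux of the proof, and the main obstacle, is the acyclicity condition: any transfinite composition of pushouts of morphisms in $F(J)$ must be carried by $U$ to a weak equivalence. Because $U$ commutes with filtered colimits, this reduces to checking a single pushout square $X \to X \sqcup_{F(A)} F(B)$, where $A \hookrightarrow B$ lies in $J$ and $X$ is an arbitrary $T_A$-algebra. The key difficulty is that this pushout in $\mathbf{Alg}^{T_A}$ does not coincide underlyingly with the pushout in $\mathbf{sSet}_*$. I would address this by constructing an explicit filtration of $U(X \sqcup_{F(A)} F(B))$ whose successive stages record attaching the cells of $B$ relative to $A$ inside each of the operations of $T_A$ applied to $X$. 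Each attaching step is then a pushout along a coproduct of acyclic cofibrations of pointed simplicial sets, hence a weak equivalence, and the transfinite composite of weak equivalences is the required equivalence.

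Once acyclicity is established, the remaining content is formal. The transfer theorem produces the model structure with generating sets $F(I)$ and $F(J)$, and by the adjunction a map $\phi$ in $\mathbf{Alg}^{T_A}$ has the right lifting property against $F(J)$ (respectively $F(I)$) exactly when $U(\phi)$ has the right lifting property against $J$ (respectively $I$), which is to say that $\phi$ is a fibration (respectively an acyclic fibration) precisely when $U(\phi)$ is. Two-out-of-three and retract closure descend along $U$. Finally, $F \dashv U$ is a Quillen adjunction because, by construction, $U$ preserves fibrations and acyclic fibrations.
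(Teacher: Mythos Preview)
The paper does not supply a proof of this theorem: it is stated with a citation to Reedy's thesis \cite{Reedy} and nothing further. There is therefore no in-paper argument to compare your proposal against.

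Your approach via the transfer theorem for cofibrantly generated model categories is correct and is the standard modern route to this result. You have correctly isolated the one nontrivial step, namely that $U$ carries pushouts of maps in $F(J)$ to weak equivalences. The filtration you describe can be made to work, but it is worth noting that for a \emph{simplicial} theory such as $T_A$ there is a quicker argument that avoids any explicit cell analysis. The category $\mathbf{Alg}^{T_A}$ is cotensored over $\mathbf{sSet}$ via $(X^K)(t_n) = X(t_n)^K$, so every algebra $X$ has a natural path object $X \to X^{\Delta^1} \to X \times X$; combined with a functorial fibrant replacement (apply $\mathrm{Ex}^\infty$ levelwise, which preserves finite products and hence algebras), Quillen's path-object criterion gives the acyclicity condition immediately. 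This is closer in spirit to the original arguments of Quillen and Reedy, and it spares you the bookkeeping of the filtration, which for a general Lawvere theory (as opposed to an operad) is somewhat more involved than your sketch suggests.
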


The fibrant algebras are the algebras whose underlying spaces are fibrant
simplicial sets. In the remainder of this section, we define a useful cofibrant
replacement of an algebra.

Let $X$ be an algebra over a theory $T$. Define the simplicial $T$-algebra
$FU_\bullet X$ to be $(FU)^{n+1}(X)$ in simplicial degree $n$. The face
and degeneracy maps are defined using the unit $\eta: 1 \rightarrow UF$ and
counit $\epsilon: FU \rightarrow 1$ of the adjunction. Specifically, we have

\begin{center}
 $d_i := (FU)^{k} X \xrightarrow{(FU)^i \epsilon (FU)^{k-i}} (FU)^{k-1} X $
 
 $s_i := (FU)^{k} X \xrightarrow{(FU)^i F\eta U(FU)^{k-i}} (FU)^{k+1} X$
\end{center}

\noindent
The counit $\epsilon: FU X \rightarrow X$ induces a map $\epsilon_*:
|FU_\bullet X| \rightarrow X$ of $T$ algebras.

\begin{defn}\label{Bar}
 The map $\epsilon_*: |FU_\bullet X| \rightarrow X$ is called \textbf{Bar
Resolution of} $X$.
\end{defn}

\begin{thm}\label{Res}
The bar resolution $\epsilon_*: |FU_\bullet X| \rightarrow X$ is a cofibrant
replacement in the category of algebras.
\end{thm}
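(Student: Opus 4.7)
The plan is to verify separately that $\epsilon_*$ is a weak equivalence and that its source $|FU_\bullet X|$ is a cofibrant $T$-algebra; together these are the two defining properties of a cofibrant replacement.

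For the weak equivalence, since weak equivalences in $\mathbf{Alg}^T$ are detected by the forgetful functor $U$ by \cref{AlgMod}, it suffices to show that $U\epsilon_*: |U FU_\bullet X| \to UX$ is a weak equivalence in $\mathbf{sSet}_*$. The augmented simplicial pointed simplicial set $U(FU)^{\bullet+1} X \to UX$ carries extra degeneracies supplied by the unit of the adjunction, namely $\eta_{U(FU)^n X}: U(FU)^n X \to UFU(FU)^n X = U(FU)^{n+1} X$. By the standard extra-degeneracy argument, this augmentation is a simplicial homotopy equivalence, whose realization is in particular a weak equivalence. We use here that $U$ preserves the geometric realization of simplicial $T$-algebras, a standard consequence of the fact that $U$ creates sifted colimits in $\mathbf{Alg}^T$.

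For cofibrancy, note that in simplicial degree $n$ we have $(FU)^{n+1} X = F((UF)^n UX)$, the free algebra on a pointed simplicial set, and since $F$ is left Quillen by \cref{AlgMod}, each level of $FU_\bullet X$ is a cofibrant $T$-algebra. A direct computation shows further that every degeneracy map of $FU_\bullet X$ lies in the image of $F$: namely, $s_i = (FU)^i F \eta U (FU)^{n-i}$ equals $F$ applied to an evident degeneracy of the simplicial pointed simplicial set $W_\bullet := (UF)^\bullet UX$. Since $F$ preserves colimits, this identifies the latching object $L_n(FU_\bullet X)$ with $F(L_n W_\bullet)$, and the latching map with $F$ applied to the canonical inclusion $L_n W_\bullet \hookrightarrow W_n$. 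As $F$ sends monomorphisms to cofibrations, $FU_\bullet X$ is Reedy cofibrant as a simplicial object in $\mathbf{Alg}^T$, and its geometric realization is therefore cofibrant by the standard skeletal-filtration argument.

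The main obstacle is the clean identification of latching objects together with the verification that $U$ commutes with geometric realization of simplicial algebras. The first reduces to inspecting the monadic bar construction and exploiting that latching objects depend only on degeneracies, all of which lie in the image of $F$; the second follows from $U$ creating sifted colimits in algebras over a theory, a standard feature of Lawvere theories that will be reused later in the paper.
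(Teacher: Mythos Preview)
Your proof is correct and follows essentially the same strategy as the paper: an extra-degeneracy argument for the weak equivalence, and Reedy cofibrancy of $FU_\bullet X$ for the cofibrancy of the realization. The only difference is cosmetic: where the paper restricts to the subcategory $\Delta^{\mathrm{op}}_0$ of degeneracies and positive face maps and then invokes \cite[Proposition~3.17]{Lifting} to pass back to $\Delta^{\mathrm{op}}$, you argue directly that latching objects depend only on degeneracies, all of which lie in the image of $F$, so that $L_n(FU_\bullet X)\cong F(L_n W_\bullet)$. Both routes reduce to the same unstated but standard fact, namely that the latching maps of the bar construction $W_\bullet=(UF)^\bullet UX$ in $\mathbf{sSet}_*$ are monomorphisms; you might note explicitly that this holds because $W_\bullet$ is a genuine simplicial object (the monadic bar resolution of $UX$), hence levelwise a simplicial set, so Eilenberg--Zilber applies.
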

\begin{proof}
The fact that $\epsilon_*$ is a weak equivalence follows from a standard extra
degeneracy argument, as in \cite[Proposition 9.8]{May}. To complete the proof,
we need to show that $FU_\bullet X$ is cofibrant. We will do this by showing
that $FU_\bullet X$ is a Reedy cofibrant algebra in the category of simplicial
algebras. Then geometric realization, being a left Quillen adjoint, sends Reedy
cofibrant simplicial algebras to cofibrant algebras.

Let $\Delta^{\text{op}}_0$ be the full subcategory of $\Delta^{\text{op}}$
consisting of the degeneracy maps and only positive face maps. Like
$\Delta^{\text{op}}$, this is a Reedy category. Consider the functor $UF_\bullet
X$ from $\Delta^{\text{op}}_0$ to $\mathbf{sSet}_\bullet$ which is $(UF)^{n}UX$
in simplicial degree $n$ and whose face maps and degeneracy maps are similar to
$FU_\bullet X$. All of the latching maps of $UF_\bullet X$ are cofibrations, and
applying $F$ we see that the restriction of $FU_\bullet X$ to
$\Delta^{\text{op}}_+$ is Reedy cofibrant. By \cite[Proposition 3.17]{Lifting},
the simplicial algebra $FU_\bullet X$ is Reedy cofibrant as well.
\end{proof}

\section{Cellular Spaces and Homotopy Projectivity}

The mapping space algebra $\Omega^A$ in the previous section defines a
functor from $\mathbf{sSet}_*$ to the category of algebras $\mathbf{Alg}^{T_A}$
whose morphisms are natural transformations. This functor has a left
adjoint, which we call $B^A$, and this adjunction passes to the level of
homotopy categories.

\begin{thm}
Let $A$ be a space, and let $T_A$ be the canonical theory from definition
\ref{Canonical}. Then the adjunction

\begin{center}
$\mathbf{sSet}_*: \xleftrightharpoons[\Omega^A]{B^A} \mathbf{Alg}^{T_A}$
\end{center}

\noindent
is Quillen.
\end{thm}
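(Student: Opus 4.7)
The plan is to verify the Quillen adjunction by checking the standard criterion on the right adjoint, namely that $\Omega^A$ preserves fibrations and acyclic fibrations. This is convenient because the model structure on $\mathbf{Alg}^{T_A}$ described in \cref{AlgMod} has its fibrations and weak equivalences created by the underlying space functor $U$, so a map $\phi$ in $\mathbf{Alg}^{T_A}$ is a (acyclic) fibration precisely when $U(\phi)$ is one in $\mathbf{sSet}_*$.

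First I would compute the composite $U \circ \Omega^A$. By definition, $U(\Omega^A(Y)) = \Omega^A(Y)(t_1) = \mathbf{Map}_*(A, Y)$, so the composite is simply the pointed mapping space functor $\mathbf{Map}_*(A, -)$. The task therefore reduces to showing that for any fibration (resp. acyclic fibration) $f: Y \to Z$ in $\mathbf{sSet}_*$, the induced map $\mathbf{Map}_*(A, Y) \to \mathbf{Map}_*(A, Z)$ is a fibration (resp. acyclic fibration) in $\mathbf{sSet}_*$.

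This last statement is a consequence of the fact that $\mathbf{sSet}_*$ is a simplicial model category, together with the observation that every pointed simplicial set is cofibrant. Explicitly, the pushout-product axiom (SM7) applied to the cofibration $* \hookrightarrow A$ and the (acyclic) fibration $f$ yields that $\mathbf{Map}_*(A, f)$ is an (acyclic) Kan fibration, as required.

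I do not anticipate any real obstacle here: the content of the theorem is essentially bookkeeping, combining the definition of the lifted model structure on algebras with the standard simplicial enrichment of $\mathbf{sSet}_*$. The only mildly subtle point to articulate clearly is that one should verify the property for $U \circ \Omega^A$ rather than for $\Omega^A$ directly, since the latter lands in a diagram category whose fibrations are not defined pointwise but rather through $U$; this is immediate from \cref{AlgMod}.
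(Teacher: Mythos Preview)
Your proposal is correct and follows essentially the same approach as the paper: both reduce the claim to the fact that $\mathbf{Map}_*(A,-)$ sends (acyclic) fibrations to (acyclic) fibrations, using that fibrations and weak equivalences in $\mathbf{Alg}^{T_A}$ are created by $U$. The only cosmetic difference is that the paper invokes the weaker criterion ``preserves acyclic fibrations and fibrations between fibrant objects,'' whereas you verify preservation of all fibrations directly via SM7 applied to the cofibration $*\hookrightarrow A$; since every object of $\mathbf{sSet}_*$ is cofibrant, your argument already covers the paper's case and a bit more.
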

\begin{proof}
It suffices to show that the right adjoint preserves fibrations between fibrant
objects and acyclic fibrations. If $f: X \rightarrow Y$ is a fibration between
fibrant objects, or an acylic fibration, then so is the induced map

\begin{center}
$\mathbf{Map}_*(A, X) \rightarrow \mathbf{Map}_*(A, Y)$
\end{center}

\noindent
In other words $U(f)$ is a fibration or acyclic fibration, and by definition of
the model structure on algebras, so is $\Omega(f)$.
\end{proof}

We will see that for spaces $A$ which satisfy a smallness condition (see
definition \ref{HomotopyProjective}) we can define a model category structure on
$\mathbf{sSet}_*$ so that this adjunction becomes a Quillen equivalence. When
this is possible, the right adjoint $\Omega^A$ has to reflect weak equivalences
between fibrant objects. The following model category structure, called the
right Bousfield localization of $\mathbf{sSet}_*$ with respect to $A$, can be
thought of as the most efficient model category which makes this happen. We
will denote it $R^A\mathbf{sSet}_*$

\begin{thm}\cite[Theorem 5.1.1]{Hirshhorn}
Let $A$ be a pointed simplicial set. There is a model category structure on
$\mathbf{sSet}_*$, where a map $f: X \rightarrow Y$ is a:

\begin{enumerate}
 \item Weak equivalence if the induced map $\mathbf{Map_*}(A,
|\text{Sing}(X)|) \rightarrow \mathbf{Map_*}(A, |\text{Sing}(Y)|)$ is a weak
equivalence.

\item Fibration if it is a Kan fibration.

\item Cofibration if it has the left lifting property with respect to all
acyclic fibrations.
\end{enumerate}
\end{thm}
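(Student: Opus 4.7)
The plan is to recognize this theorem as a specific instance of Hirschhorn's right Bousfield localization theorem applied to the standard Quillen model structure on $\mathbf{sSet}_*$ and the singleton set $K = \{A\}$. The proof proceeds by verifying the model category axioms in order.

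First, I would check the routine axioms. The class of $A$-colocal weak equivalences (item 1 above) satisfies 2-out-of-3 and is closed under retracts, because $|\text{Sing}(-)|$ is a functorial fibrant replacement in $\mathbf{sSet}_*$, $\mathbf{Map}_*(A, -)$ preserves weak equivalences between Kan complexes, and standard weak equivalences in $\mathbf{sSet}_*$ have these properties. Cofibrations are closed under retracts by virtue of being defined by a lifting property.

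The core of the proof is the factorization axioms. For the factorization of a map $f \colon X \to Y$ as a trivial cofibration followed by a Kan fibration, I would use the standard factorization in $\mathbf{sSet}_*$: since every standard weak equivalence is an $A$-colocal equivalence, and standard acyclic cofibrations have the left lifting property against all Kan fibrations (in particular against the acyclic fibrations of the new structure), they remain trivial cofibrations here. For the factorization as a cofibration followed by an acyclic fibration, I would apply Quillen's small object argument to a set of generating cofibrations constructed from $A$, following Hirschhorn's cellularization technique.

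The main obstacle is identifying this generating set and proving that its injectives coincide with the Kan fibrations that are $A$-colocal equivalences. This requires an obstruction-theoretic characterization of $A$-colocal equivalences between fibrant objects in terms of lifting properties, which in turn depends on the cellularity and left properness of $\mathbf{sSet}_*$. Once this characterization is established, the lifting axioms follow formally from the factorization axioms by the retract argument.
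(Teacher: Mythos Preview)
Your approach and the paper's agree on the essential point: this model structure is the right Bousfield localization of the standard Quillen structure on $\mathbf{sSet}_*$ at the set $K=\{A\}$. The paper's proof, however, is a single sentence: it simply invokes Hirschhorn's general existence theorem for right Bousfield localizations after observing that $\mathbf{sSet}_*$ is proper and cellular (citing Hirschhorn for both facts). You instead sketch how one would verify the model category axioms from scratch---essentially outlining the proof of Hirschhorn's general theorem rather than citing it. Your outline is broadly accurate, but note one slip: the hypothesis needed for right Bousfield localization is \emph{right} properness, not left properness (of course $\mathbf{sSet}_*$ enjoys both, so the conclusion is unaffected). The paper's route is shorter and appropriate here since the statement already attributes the theorem to Hirschhorn; your more detailed outline would be better suited to an exposition that did not want to treat the existence theorem as a black box.
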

\begin{proof}
This follows from a general theorem on existence of right Bousfield
localizations \cite[Theorem 5.1.1]{Hirshhorn}, since the model category of
pointed simplicial sets is proper and cellular
\cite[Proposition 5.1.8]{Hirshhorn}
\end{proof}

By \cite[5.1.5]{Hirshhorn}, the cofibrant spaces in $R^A\mathbf{sSet}_*$ are the
$A$-cellular spaces, which we define now.

\begin{defn}
A nonempty class of simplicial sets is said to be \textbf{closed} if it is
closed under homotopy colimits and weak equivalences. The smallest closed class
which contains a simplicial set $A$ is called the class of \textbf{$A$-cellular
spaces}, denoted \textbf{Cell} $A$.
\end{defn}

The $A$-cellular spaces are roughly the spaces built from $A$. For example, the
$S^n$-cellular spaces are the $(n-1)$-connected simplicial sets. For us, the
most important example is the following.

\begin{lem}
 Let $n \geq 2$, and $P$ be a set of primes. Then $\mathbf{Cell}(S^n_P)$ is the
class of $(n-1)$-connected $P$-local cell-complexes.
\end{lem}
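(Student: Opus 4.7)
The plan is to prove the stated equality by establishing the two inclusions separately.

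For the inclusion $\mathbf{Cell}(S^n_P) \subseteq \{(n-1)\text{-connected } P\text{-local cell-complexes}\}$, I would show that the class $\mathcal{P}$ of $(n-1)$-connected $P$-local simplicial sets is a closed class containing $S^n_P$; by minimality of $\mathbf{Cell}(S^n_P)$, the inclusion follows. The sphere $S^n_P$ is in $\mathcal{P}$ essentially by construction: connectivity is inherited from $S^n$, and the homotopy groups of the mapping telescope compute the $P$-localization of $\pi_*(S^n)$. Closure of $\mathcal{P}$ under weak equivalence is automatic. For closure under homotopy colimits, $(n-1)$-connectivity is itself a closed condition, while $P$-locality in the simply connected range can be detected on integral homology; since homology commutes with homotopy colimits and $P$-local abelian groups are stable under colimits in $\mathbf{Ab}$, the conclusion follows.

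For the reverse inclusion I would take an $(n-1)$-connected $P$-local cell-complex $X$ and exhibit it as an iterated homotopy colimit starting from $S^n_P$. The basepoint lies in $\mathbf{Cell}(S^n_P)$ as the empty homotopy colimit, and each suspension $S^{k+1}_P \simeq \Sigma S^k_P$ is realized by the homotopy pushout $* \leftarrow S^k_P \to *$, so by induction every $P$-local sphere of dimension at least $n$ lies in $\mathbf{Cell}(S^n_P)$, as do wedges of such spheres. Filtering $X$ by subcomplexes $X_n \subseteq X_{n+1} \subseteq \cdots$, with $X_k$ obtained from $X_{k-1}$ by attaching cones along a wedge of $S^{k-1}_P$'s, each step is the homotopy pushout $X_{k-1} \leftarrow \bigvee S^{k-1}_P \to *$, and $X$ itself is the sequential homotopy colimit of the $X_k$. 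Hence $X \in \mathbf{Cell}(S^n_P)$.

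The main obstacle is the technical verification that homotopy colimits preserve $P$-locality in the simply connected range. Although the homological criterion for $P$-locality makes this plausible, to handle arbitrary homotopy colimits one should reduce to pushouts and sequential colimits via standard resolution arguments, where the interactions with integral homology and with the class of $P$-local abelian groups are transparent. A secondary point, easier but worth checking, is that up to weak equivalence every $(n-1)$-connected $P$-local simplicial set actually admits a $P$-local cell-complex model, so the class on the right of the stated equality coincides with $\mathcal{P}$.
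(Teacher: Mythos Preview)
Your proposal is correct and follows essentially the same strategy as the paper: both directions are handled the same way, using the homological criterion for $(n-1)$-connectivity and $P$-locality together with the reduction of arbitrary homotopy colimits to homotopy pushouts and filtered colimits. The only cosmetic difference is that where the paper cites Sullivan for the existence of a $P$-local cell structure, you spell out the inductive cell-attachment argument explicitly.
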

\begin{proof}
By definition, $\mathbf{Cell}(S^n_P)$ is the smallest class of simplicial sets
containing $S^n_P$ which is closed under weak equivalences and homotopy
colimits. The constructions in \cite{sullivan} show that it is possible to
build any $P$-local space by homotopy colimits. It is shown in
\cite[5.3.7]{Hirshhorn} that it is enough to consider filtered colimits and
homotopy pushouts. Since the property of being $(n-1)$-connected and $P$-local
is determined by reduced homology, any homotopy pushout or filtered colimit of
$(n-1)$-connected $P$-local spaces is still $(n-1)$-connected and $P$-local.
\end{proof}

The final ingredient in our recognition principle is that $A$ needs to satisfy
a smallness condition.

\begin{defn}\label{HomotopyProjective}
Let $C$ be a closed class. A pointed simplicial set $A$ is
called \textbf{homotopy projective relative to} $C$ if $\mathbf{Map_*}(A, -)$
commutes with homotopy colimits of filtered or simplicial diagrams which take
their value in $C$. A space $A$ is called \textbf{homotopy self-projective} if
it is homotopy projective relative to \textbf{Cell} $A$.
\end{defn}

\begin{prop}
The spheres $S^n$ are homotopy self-projective.
\end{prop}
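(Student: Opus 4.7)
The plan is to reduce the proposition to a statement about the $n$-fold loop functor. Since $S^n \simeq \Sigma^n S^0$, the suspension--loop adjunction yields a natural weak equivalence $\mathbf{Map}_*(S^n, Y) \simeq \Omega^n Y$ whenever $Y$ is fibrant. Because every space in $\mathbf{Cell}(S^n)$ is $(n-1)$-connected, it therefore suffices to show that $\Omega^n$ commutes, up to weak equivalence, with filtered homotopy colimits and with realizations of simplicial diagrams of $(n-1)$-connected pointed simplicial sets.

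For the filtered case, I would use that $S^n$ has a finite simplicial model, hence is $\aleph_0$-small. After replacing a filtered diagram by one whose transition maps are cofibrations, the homotopy colimit agrees with the strict colimit, and $\mathbf{Map}_*(S^n, -)$ preserves filtered strict colimits on the nose. Since filtered colimits of pointed simplicial sets commute with homotopy groups, the resulting comparison map is a weak equivalence.

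The realization case is the main obstacle. Here the plan is to invoke the realization lemma of Bousfield and Friedlander: for a Reedy cofibrant simplicial object $X_\bullet$ in pointed simplicial sets satisfying the $\pi_*$-Kan condition, the natural map $|\Omega X_\bullet| \to \Omega|X_\bullet|$ is a weak equivalence. The $\pi_*$-Kan condition holds automatically when each $X_k$ is connected, which is part of our hypothesis since $n \geq 1$. Iterating the lemma $n$ times, and using that $\Omega^k X_\bullet$ remains termwise connected for $k < n$ when $X_\bullet$ is termwise $(n-1)$-connected, gives the desired equivalence $|\Omega^n X_\bullet| \simeq \Omega^n|X_\bullet|$. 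The delicate point I would spend the most care on is maintaining Reedy cofibrancy and termwise fibrancy through the iteration; this can be arranged by performing a functorial Reedy cofibrant and termwise fibrant replacement at the outset, so that $\Omega$ applied levelwise preserves the hypotheses needed at each subsequent stage.
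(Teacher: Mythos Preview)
Your proposal is correct and follows essentially the same strategy as the paper: the filtered case is handled by the finiteness of $S^n$, and the simplicial case by the Bousfield--Friedlander theorem applied to levelwise $(n-1)$-connected simplicial spaces. The paper simply cites \cite[Theorem~B.4]{BF} in one line, whereas you spell out the iteration of $\Omega$ and the attendant Reedy-cofibrancy bookkeeping, but the underlying argument is the same.
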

\begin{proof}
$\mathbf{Map_*}(S^n, -)$ commutes with homotopy filtered colimits because $S^n$
is a finite simplicial set. The class \textbf{Cell} $S^n$ is exactly the class
of $(n-1)$-connected spaces, so it follows from the Bousfield-Friedlander
theorem \cite[Theorem B.4]{BF} that $\mathbf{Map_*}(S^n, -)$ commutes with
homotopy colimits of $(n-1)$-connected simplicial spaces.
\end{proof}

\begin{prop} \label{hoproj}
For $n \geq 2$ and $P$ a set of primes, the $P$-local sphere $S^n_P$ is homotopy
self-projective.
\end{prop}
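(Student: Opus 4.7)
The plan is to reduce the proposition to the previous one (homotopy self-projectivity of $S^n$) by establishing that on $\mathbf{Cell}(S^n_P)$, mapping out of $S^n_P$ is equivalent to mapping out of $S^n$, and then exploiting the inclusion $\mathbf{Cell}(S^n_P) \subseteq \mathbf{Cell}(S^n)$.

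The central step is the reduction itself: for any fibrant $Y \in \mathbf{Cell}(S^n_P)$, I claim the map
$$\mathbf{Map}_*(S^n_P, Y) \to \mathbf{Map}_*(S^n, Y)$$
induced by the inclusion of the first stage of the telescope is a weak equivalence. Since $S^n_P$ is weakly equivalent to the mapping telescope, which is the homotopy colimit of the tower $(S^n \xrightarrow{l_1} S^n \xrightarrow{l_2} \cdots)$, we have
$$\mathbf{Map}_*(S^n_P, Y) \simeq \mathrm{holim}\Bigl( \cdots \xrightarrow{l_2^*} \mathbf{Map}_*(S^n, Y) \xrightarrow{l_1^*} \mathbf{Map}_*(S^n, Y) \Bigr).$$
Each $l_i^*$ acts by multiplication by $l_i$ on homotopy groups; since $Y$ is $P$-local and $(n-1)$-connected, $\pi_k \mathbf{Map}_*(S^n, Y) \cong \pi_{k+n}(Y)$ is $P$-local, so multiplication by $l_i$ is an isomorphism. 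The tower therefore consists of weak equivalences between fibrant objects, and projection to any stage is a weak equivalence.

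Granting this, for any filtered or simplicial diagram $X_\bullet$ in $\mathbf{Cell}(S^n_P)$ (taken to have fibrant values without loss of generality), the homotopy colimit $\mathrm{hocolim}\, X_\bullet$ remains in $\mathbf{Cell}(S^n_P)$ by closure of this class under homotopy colimits, so the reduction applies to each $X_i$ and to $\mathrm{hocolim}\, X_\bullet$. The canonical comparison map then factors as
$$\mathrm{hocolim}_i \mathbf{Map}_*(S^n_P, X_i) \simeq \mathrm{hocolim}_i \mathbf{Map}_*(S^n, X_i) \to \mathbf{Map}_*(S^n, \mathrm{hocolim}_i X_i) \simeq \mathbf{Map}_*(S^n_P, \mathrm{hocolim}_i X_i),$$
where the outer equivalences come from the reduction and the middle map is a weak equivalence by the previous proposition, since $\mathbf{Cell}(S^n_P) \subseteq \mathbf{Cell}(S^n)$ and $S^n$ is homotopy self-projective.

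The main obstacle I anticipate is the first reduction: identifying the bonding maps of the $\mathrm{holim}$ tower with multiplication by $l_i$ on homotopy, and being careful that this identification is compatible with the weak equivalence between $S^n_P$ and the telescope. Once that is in place, the rest is a diagram chase, and the required naturality of the equivalence $\mathbf{Map}_*(S^n_P, -) \simeq \mathbf{Map}_*(S^n, -)$ in the argument is automatic because it is induced by a single fixed map $S^n \to S^n_P$.
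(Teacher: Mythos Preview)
Your proposal is correct and follows essentially the same route as the paper: reduce to $S^n$ via the natural equivalence $\mathbf{Map}_*(S^n_P, Y) \simeq \mathbf{Map}_*(S^n, Y)$ for $P$-local $Y$, then run the commutative-square argument using the homotopy self-projectivity of $S^n$ and the inclusion $\mathbf{Cell}(S^n_P) \subseteq \mathbf{Cell}(S^n)$. The only difference is that you justify the reduction explicitly via the telescope description, whereas the paper simply invokes it as the defining property of the localization map $L_P$.
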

\begin{proof}
The map:

\begin{center}
 $\mathbf{Map_*}(S^n_P, K) \rightarrow \mathbf{Map_*}(S^n, K)$
\end{center}

\noindent
induced from the localization map $L_P: S^n \rightarrow S^n_P$ is a
weak equivalence for $P$-local spaces $K$. Let $\widetilde{K}: D \rightarrow
\mathbf{sSet}_*$ be either a filtered diagram or else a simplicial diagram which
takes its values in \textbf{Cell} $S^n_P$. We have the following commutative
diagram:

\begin{center}
\begin{tikzpicture}
  \matrix(m)[matrix of math nodes,row sep=2em,column sep=4em,minimum width=2em,
text height = 1.5 ex, text depth = 0.25]
  {\underset{D}{\text{hocolim}}\, \mathbf{Map_*}(S^n_P, \widetilde{K})  &
\mathbf{Map_*}(S^n_P, \underset{D}{\text{hocolim}}\, \widetilde{K}) \\
\underset{D}{\text{hocolim}}\, \mathbf{Map_*}(S^n, \widetilde{K}) &
\mathbf{Map_*}(S^n, \underset{D}{\text{hocolim}}\, \widetilde{K}) \\ };
  \path[-stealth]
    (m-1-1) edge node [above] {$\nu_P$} (m-1-2)
    (m-2-1) edge node [below] {$\nu$} (m-2-2)
    (m-1-1) edge node [left] {$L_P$} (m-2-1)
    (m-1-2) edge node [right] {$L_P$} (m-2-2);
\end{tikzpicture}
\end{center} 

The left and right maps are weak equivalences because $\widetilde K$ takes its
value in $P$-local spaces. The bottom map is a weak equivalence because $S^n$ is
homotopy self-projective and $\widetilde K$ takes its values in
$(n-1)$-connected spaces. Hence the top map is also a weak equivalence.

\end{proof}

These are actually the only homotopy self-projective spaces we know of, and we
think it is likely that wedges of $n$-spheres are the only finite spaces which
are homotopy self-projective. On the other hand, we can say that the
property of being homotopy self-projective is stable.

\begin{prop}
  If $A$ is connected and homotopy self-projective, so is $\Sigma A$.
\end{prop}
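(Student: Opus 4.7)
The plan is to reduce homotopy self-projectivity of $\Sigma A$ to that of $A$ via the loop--suspension adjunction $\mathbf{Map}_*(\Sigma A, X) \cong \Omega\, \mathbf{Map}_*(A, X)$. The key preliminary observation is that $\Sigma A$ lies in $\mathbf{Cell}(A)$: it is the homotopy pushout of $* \leftarrow A \rightarrow *$, and both $A$ and the basepoint lie in $\mathbf{Cell}(A)$ (the basepoint arising as an empty homotopy colimit). By minimality of $\mathbf{Cell}(\Sigma A)$ this gives $\mathbf{Cell}(\Sigma A) \subseteq \mathbf{Cell}(A)$, so any diagram $\widetilde{K}$ valued in $\mathbf{Cell}(\Sigma A)$ is automatically valued in $\mathbf{Cell}(A)$, where the hypothesis on $A$ applies.

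For a filtered diagram $\widetilde{K}\colon D \to \mathbf{Cell}(\Sigma A)$ the argument is then routine: using the self-projectivity of $A$ to pass $\mathbf{Map}_*(A, -)$ through $\mathrm{hocolim}_D$, and using that $\Omega$ commutes with filtered homotopy colimits (since $S^1$ is a finite simplicial set), one obtains
\[
\mathrm{hocolim}_D\, \mathbf{Map}_*(\Sigma A, \widetilde{K}) \simeq \Omega\, \mathrm{hocolim}_D\, \mathbf{Map}_*(A, \widetilde{K}) \simeq \Omega\, \mathbf{Map}_*(A, \mathrm{hocolim}_D \widetilde{K}) = \mathbf{Map}_*(\Sigma A, \mathrm{hocolim}_D \widetilde{K}).
\]

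For a simplicial diagram $\widetilde{K}_\bullet$ the same outline works, but commuting $\Omega$ past the realization of $Y_\bullet := \mathbf{Map}_*(A, \widetilde{K}_\bullet)$ requires Bousfield--Friedlander, and this is the main obstacle. A direct application of the levelwise-connected form of the theorem is not available here, because $[A, \widetilde{K}_k]_*$ need not vanish, so $Y_\bullet$ need not be levelwise connected. I would get around this by working with $\Omega Y_\bullet = \mathbf{Map}_*(\Sigma A, \widetilde{K}_\bullet)$ directly and observing that its homotopy groups $\pi_n\, \mathbf{Map}_*(\Sigma A, \widetilde{K}_k) = [\Sigma^{n+1} A, \widetilde{K}_k]_*$ carry natural group structures for every $n \geq 0$, coming from the suspension cogroup on $\Sigma^{n+1} A$. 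Hence each $\pi_n$ of this simplicial space is a simplicial group --- in particular Kan --- so the $\pi_*$-Kan condition holds, the Bousfield--Friedlander spectral sequence converges, and a comparison of spectral sequences (shifted by one in the loops direction) yields $\Omega\, |Y_\bullet| \simeq |\Omega Y_\bullet|$, which combined with $A$-self-projectivity gives the desired equivalence. The essential input is that $\Sigma A$ is a suspension, so that taking $\Omega$ automatically provides the group structure on $\pi_0$ that the naive Bousfield--Friedlander argument would have demanded as a connectedness hypothesis.
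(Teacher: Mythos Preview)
Your reduction via $\mathbf{Cell}(\Sigma A)\subseteq\mathbf{Cell}(A)$ and the loop--suspension adjunction handles the filtered case, but the simplicial case has a genuine gap. The spectral-sequence comparison you invoke needs a convergent Bousfield--Friedlander spectral sequence computing $\pi_*|Y_\bullet|$ (so that its shift computes $\pi_*\Omega|Y_\bullet|$), and that requires the $\pi_*$-Kan condition on $Y_\bullet$ itself, not merely on $\Omega Y_\bullet$. You only verify it for $\Omega Y_\bullet$; for $Y_\bullet=\mathbf{Map}_*(A,\widetilde K_\bullet)$ the simplicial set $\pi_0 Y_\bullet=[A,\widetilde K_\bullet]_*$ carries no natural group structure and need not be Kan. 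Without control of the $q=0$ row there is no comparison to make: those terms genuinely feed into $\pi_p|Y_\bullet|$ for $p\geq 1$. Indeed, for a levelwise discrete $Y_\bullet$ such as a bar construction $BG_\bullet$ one has $|\Omega Y_\bullet|\simeq *$ while $\Omega|Y_\bullet|\simeq G$, so the implication ``$\pi_*$-Kan for $\Omega Y_\bullet$ $\Rightarrow$ $|\Omega Y_\bullet|\simeq\Omega|Y_\bullet|$'' is simply false in the generality in which you are arguing.

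The paper avoids this by putting the loop on the other side of the adjunction: instead of $\mathbf{Map}_*(\Sigma A,X)\cong\Omega\,\mathbf{Map}_*(A,X)$ it uses $\mathbf{Map}_*(\Sigma A,X)\cong\mathbf{Map}_*(A,\Omega X)$, together with Farjoun's theorem that $X\in\mathbf{Cell}(\Sigma A)$ iff $\Omega X\in\mathbf{Cell}(A)$. One then applies $A$-self-projectivity to the diagram $\Omega\widetilde K$, and the remaining step $\mathrm{hocolim}_D\,\Omega\widetilde K\simeq\Omega\,\mathrm{hocolim}_D\,\widetilde K$ follows from the already-established self-projectivity of $S^1$, because $\widetilde K$ itself is valued in \emph{connected} spaces ($\mathbf{Cell}(\Sigma A)\subseteq\mathbf{Cell}(S^1)$). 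The crucial difference is that $\Omega$ is commuted past the realization of $\widetilde K_\bullet$, which is levelwise connected, rather than past the realization of $\mathbf{Map}_*(A,\widetilde K_\bullet)$, which need not be. Your elementary containment $\mathbf{Cell}(\Sigma A)\subseteq\mathbf{Cell}(A)$ is too weak to support this manoeuvre; Farjoun's result is exactly what transports the connectedness hypothesis to the place where it is needed.
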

\begin{proof}
A space $X$ is in $\mathbf{Cell}(\Sigma A)$ if and only if $\Omega X$ is in
$\mathbf{Cell}(A)$ \cite{Farjoun}. So if $\widetilde K: D \rightarrow
\mathbf{S}_*$ is some filtered or simplicial diagram which takes its values in
$\mathbf{Cell}(\Sigma A)$, then the diagram $\Omega \widetilde K$ obtained by
postcomposition takes its values in $\mathbf{Cell}(A)$. Since $A$ is homotopy
self-projective

\begin{center}
$\underset{D}{\text{hocolim}}\,\mathbf{Map_*}(\Sigma A, \widetilde K) \cong
\underset{D}{\text{hocolim}}\, \mathbf{Map_*}(A, \Omega \widetilde K)
\rightarrow \mathbf{Map_*}(A, \underset{D}{\text{hocolim}}\, \Omega \widetilde
K)$ 
\end{center}

\noindent
is a weak equivalence. Since $\widetilde K$ takes its values
in connected spaces, and since $S^1$ is homotopy self-projective there
is a weak equivalence

\begin{center}
 $\mathbf{Map_*}(A, \underset{D}{\text{hocolim}} \, \Omega \widetilde K)
\rightarrow \mathbf{Map_*}(A, \Omega(\underset{D}{\text{hocolim}} \, \widetilde
K)) \cong \mathbf{Map_*}(\Sigma A, \underset{D}{\text{hocolim}}\, \widetilde K)$
\end{center}

\end{proof}

\section{Proof of the Main Theorem} \label{proof}

In this section we prove \cref{MainTheorem}. It will follow easily from
the following

\begin{thm} \label{Q}
Let $A$ be a homotopy self-projective space. Then the Quillen adjunction

\begin{center}
 $R^A\mathbf{sSet}_* \xleftrightharpoons[\Omega^A] {B^A}
\mathbf{Alg}^{T_A}$
\end{center}

\noindent
is a Quillen equivalence.
\end{thm}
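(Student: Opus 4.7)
The plan is to verify the standard characterization of a Quillen equivalence: (i) the right adjoint $\Omega^A$ reflects weak equivalences between fibrant objects of $R^A\mathbf{sSet}_*$, and (ii) for every cofibrant algebra $Z \in \mathbf{Alg}^{T_A}$, the derived unit $Z \to \Omega^A R B^A Z$ is a weak equivalence in $\mathbf{Alg}^{T_A}$, where $R$ denotes fibrant replacement in $R^A\mathbf{sSet}_*$.

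Condition (i) is a direct unpacking of the definitions. Fibrant objects in $R^A\mathbf{sSet}_*$ are Kan complexes, weak equivalences of algebras are detected on underlying spaces by $U$, and $U \circ \Omega^A$ is $\mathbf{Map}_*(A,-)$, which agrees up to weak equivalence with $\mathbf{Map}_*(A, \text{Sing}|\cdot|)$ on Kan complexes.

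For condition (ii), the strategy is to use the bar resolution $|FU_\bullet Z| \xrightarrow{\sim} Z$ from \cref{Res} to reduce to free algebras, then invoke homotopy self-projectivity of $A$. Since $B^A$ is a left Quillen functor, it commutes with geometric realization, giving $B^A Z \simeq |B^A F U_\bullet Z|$. A Yoneda calculation identifies $B^A F(V) \cong V \wedge A$ for any pointed simplicial set $V$, so every simplicial level of $|B^A F U_\bullet Z|$ lies in $\mathbf{Cell}(A)$. Homotopy self-projectivity then allows $\mathbf{Map}_*(A,-)$ to commute with this realization, reducing (ii) to the levelwise claim that for each free algebra $F(V)$ appearing in the bar construction, the unit $F(V) \to \Omega^A R(V \wedge A)$ is a weak equivalence.

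The main obstacle is this levelwise claim. When $V = \bigvee_n S^0$, the free algebra $F(V)$ coincides with the representable $T_A$-algebra at $t_n$, whose values $F(V)(t_m) = T_A(t_n, t_m) = \mathbf{Map}_*(\bigvee_m A, \text{Sing}|\bigvee_n A|)$ agree on the nose with $\Omega^A R(V \wedge A)(t_m) = \mathbf{Map}_*(\bigvee_m A, \text{Sing}|\bigvee_n A|)$, so the unit is the identity. For general $V$, I would write $V$ as a homotopy colimit of such wedges (for instance via its simplicial structure) and push this colimit through both sides: the left adjoint $F$ commutes with homotopy colimits automatically, while on the right, $-\wedge A$, $|\cdot|$, and $\text{Sing}$ send the diagram into $\mathbf{Cell}(A)$, so a second application of homotopy self-projectivity lets $\mathbf{Map}_*(A,-)$ pass through it. Arranging these commutations and cellularity claims precisely enough to make the comparison uniform across the bar construction is the bulk of the technical work.
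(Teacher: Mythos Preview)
Your proposal is correct and follows essentially the same route as the paper: verify (i) directly, reduce (ii) via the bar resolution to free algebras, identify $B^AF(V)\cong V\wedge A$, check the finite-wedge case by inspection, and use homotopy self-projectivity to pass hocolims through $\Omega^A$. The only points the paper makes more explicit are a preliminary check that the adjunction remains Quillen after passing to $R^A\mathbf{sSet}_*$ (not automatic, since $\mathrm{Id}\colon\mathbf{sSet}_*\to R^A\mathbf{sSet}_*$ is \emph{right} Quillen), and a two-step decomposition of a general free algebra---first a filtered hocolim to reduce $F(S)$ for an arbitrary set $S$ to the finite case $F(n_+)$, then a simplicial hocolim over the bar resolution---whereas you fold these into a single remark.
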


\begin{proof}
We first need to show that this is still a Quillen adjunction after passing
from $\mathbf{sSet}_*$ to $R^A\mathbf{sSet}_*$. Note that the identity functor:

\begin{center}
 $Id: \mathbf{sSet}_* \rightarrow R^A\mathbf{sSet}_*$
\end{center}

\noindent
is a \emph{right} Quillen adjoint, so that this is not automatic. However
$R^A\mathbf{sSet}_*$ is a simplicial model category with the same simplicial
structure as $\mathbf{sSet}_*$ \cite[Theorem 5.1.2]{Hirshhorn}. So $\Omega^A$
preserves fibrations between fibrant objects and acyclic fibrations. 

Next, since $\Omega^A$ reflects weak equivalences between fibrant objects, it is
enough to show that the unit map $\eta: X \rightarrow \Omega^A(\text{Sing}|B^A
X|)$ is a weak equivalence for all cofibrant $X$. We split the proof into
cases: 

\textbf{1.} Consider first the case when $X = F(n_+)$, where $n = \{0, 1, ...,
n\}$ is the set based at $0$. For any space $K$, the composition $U \Omega^A(K)$
is the mapping space $\mathbf{Map}_*(A, X)$, so that in the other direction
$B^AF(K)$ is $A \wedge K$. Thus, applying the forgetful functor to the map $X
\rightarrow \Omega^A(\text{Sing}|B^A X|)$ is isomorphism:

\begin{center}
 $F(n_+) \rightarrow \mathbf{Map}_*(A, \text{Sing}|B^A F(n_+)|) \rightarrow
\mathbf{Map}_*(A, \text{Sing}|\bigvee_n A|)$
\end{center}

\textbf{2.} Next, let $S$ be a set, and let $X = F(S)$. Then we can write $X$
as the filtered homotopy colimit of over the finite subsets of $S$:

\begin{center}
 $X = \text{hocolim}_{L \subseteq S}\, F(L)$
\end{center}

We will write the unit as a composition. First there is the map
\begin{center}
 $\text{hocolim}_{L \subseteq S}\, F(L) \xrightarrow{\eta_\bullet}
\text{hocolim}_{L \subseteq S}\, \Omega^A (\text{Sing}|B^A F(L)|)$
\end{center}

\noindent
obtained by applying the unit degreewise. This map is a weak equivalence since
by part 1 it is an objectwise weak equivalence. We compose this with the map

\begin{center}
$\text{hocolim}_{L \subseteq S}\, \Omega^A (\text{Sing}|B^A F(L)|)
\xrightarrow{\nu} \Omega^A(\text{hocolim}_{L \subseteq S}\, \text{Sing}|B^A
F(L)|)$
\end{center}

\noindent
which is a weak equivalence because $A$ is homotopy projective. The final map
in the composition is

\begin{center}
$\Omega^A(\text{hocolim}_{L \subseteq S}\, \text{Sing}|B^A
F(L)|) \xrightarrow{i} \Omega^A( \text{Sing}|B^A \text{hocolim}_{L \subseteq
S}\, F(L)|) $
\end{center}

\noindent
This is a weak equivalence because $\text{hocolim}_{L \subseteq S}\,
\text{Sing}|B^A F(L)| \rightarrow \text{Sing}|B^A \text{hocolim}_{L \subseteq
S}\, F(L)|$ is a weak equivalence, by \cite[Proposition 18.9.12]{Hirshhorn}.

\textbf{3.} Next suppose that $X = \text{hocolim}_{\Delta^{\text{op}}}
K_\bullet$, where each $K_n$ is a free algebra $F(S)$. Then a similar argument
as in step 2 shows that the unit is a weak equivalence in this case.

\textbf{4.} Finally, let $X$ be a cofibrant algebra. Then the bar
resolution $\epsilon_*: \text{hocolim}_{\Delta^{\text{op}}}\, FU_\bullet(X)
\rightarrow X$ fits into the commutative diagram:

\begin{center}
\begin{tikzpicture}
  \matrix(m)[matrix of math nodes,row sep=4em,column sep=5em,minimum width=4em,
text height = 2.5 ex, text depth = 1]
  {\text{hocolim}_{\Delta^{\text{op}}}\, FU_\bullet(X) & \Omega^A
\text{Sing}|B^A \text{hocolim}_{\Delta^{\text{op}}}FU_\bullet(X)|    \\
   X & \Omega^A \text{Sing} |B^A X|   \\ };
  \path[-stealth]
    (m-1-1) edge node [above] {$\eta_\bullet$}  (m-1-2)
    (m-2-1) edge node [below] {$\eta$}  (m-2-2)
    (m-1-1) edge node [left]  {$\epsilon_*$} (m-2-1)
    (m-1-2) edge node [right]  {$\Omega^A\text{Sing}B^A \epsilon_*$} (m-2-2);
\end{tikzpicture}
\end{center} 

\noindent
The bar resolution has the form from step 3, so that $\eta_\bullet$ is a
weak equivalence. By theorem \label{Res}, the map $\epsilon_*$ is a weak
equivalence. Finally, $\Omega^A$ preserves weak equivalences between fibrant
objects, so that $\eta$ is a weak equivalence as well.
\end{proof}

\begin{proof}[Proof of Theorem \ref{MainTheorem}]
 We showed in section 2 that if a space $X$ is weakly equivalent to
$\mathbf{Map}_*(A, Y)$ for some $Y$, then $X$ is weakly equivalent to the
underlying space of the algebra $\Omega^A(Y)$. Conversely, suppose a space $X$
is weakly equivalent to the underlying space of an algebra $M$, then $X$ is also
weakly equivalent to the underlying space of a cofibrant replacement $M^c$ of
$M$. By \ref{Q}, there is a weak equivalence of algebras $M^c \rightarrow
\Omega^A (Sing|B^A M^c|)$. By the definition of weak equivalences between
algebras, it follows that $X$ is weakly equivalent to $\mathbf{Map}_*(A,
Sing|B^A M^c|)$.
\end{proof}


\begin{thebibliography}{10}

\bibitem{adamek2010algebraic}
Ji{\v{r}}{\'\i} Ad{\'a}mek, J~Ad{\'a}mek, J~Rosick{\`y}, and EM~Vitale.
\newblock {\em Algebraic theories: a categorical introduction to general
  algebra}.
\newblock Number 184. Cambridge University Press, 2010.

\bibitem{Badzioch}
Bernard Badzioch.
\newblock Algebraic theories in homotopy theory.
\newblock {\em Annals of mathematics}, pages 895--913, 2002.

\bibitem{BCV}
Bernard Badzioch, Kuerak Chung, and Alexander~A Voronov.
\newblock The canonical delooping machine.
\newblock {\em Journal of Pure and Applied Algebra}, 208(2):531--540, 2007.

\bibitem{mappingspaces}
Bernard Badzioch and Wojciech Dorabia{\l}a.
\newblock A note on localizations of mapping spaces.
\newblock {\em Israel Journal of Mathematics}, 177(1):441--444, 2010.

\bibitem{Beck}
Jon Beck.
\newblock On h-spaces and infinite loop spaces.
\newblock In {\em Category Theory, Homology Theory and Their Applications III},
  pages 139--153. Springer, 1969.

\bibitem{bergner}
Julie Bergner.
\newblock Rigidification of algebras over multi-sorted algebraic theories.

\bibitem{BV}
J.~M. Boardman and R.~M. Vogt.
\newblock {\em Homotopy invariant algebraic structures on topological spaces}.
\newblock Lecture Notes in Mathematics, Vol. 347. Springer-Verlag, Berlin,
  1973.

\bibitem{BF}
Aldridge~K Bousfield and Eric~M Friedlander.
\newblock Homotopy theory of $\gamma$-spaces, spectra, and bisimplicial sets.
\newblock In {\em Geometric applications of homotopy theory II}, pages 80--130.
  Springer, 1978.

\bibitem{Chacholski}
Wojciech Chach{\'o}lski.
\newblock Closed classes.
\newblock In {\em Algebraic Topology: New Trends in Localization and
  Periodicity}, pages 95--118. Springer, 1996.

\bibitem{corrigan}
Bruce~R Corrigan-Salter.
\newblock Rigidification of homotopy algebras over finite product sketches.
\newblock {\em Journal of Pure and Applied Algebra}, 219(6):1962--1991, 2015.

\bibitem{DS}
William~G Dwyer and Jan Spalinski.
\newblock Homotopy theories and model categories.
\newblock {\em Handbook of algebraic topology}, 73126, 1995.

\bibitem{Farjoun}
Emmanuel Farjoun.
\newblock {\em Cellular spaces, null spaces and homotopy localization}.
\newblock Number 1621-1622. Springer, 1996.

\bibitem{GJ}
Paul~G Goerss and John~F Jardine.
\newblock {\em Simplicial homotopy theory}, volume 174.
\newblock Springer, 2009.

\bibitem{Hirshhorn}
Phillip~S Hirschhorn.
\newblock Localization of model categories.
\newblock {\em Draft: February}, 22, 2000.

\bibitem{Hovey}
Mark Hovey.
\newblock {\em Model categories}.
\newblock Number~63. American Mathematical Soc., 2007.

\bibitem{Lifting}
Niles Johnson and Justin Noel.
\newblock Lifting homotopy t-algebra maps to strict maps.
\newblock {\em Advances in Mathematics}, 264:593--645, 2014.

\bibitem{lawvere1963functorial}
F~William Lawvere.
\newblock Functorial semantics of algebraic theories.
\newblock {\em Proceedings of the National Academy of Sciences of the United
  States of America}, 50(5):869, 1963.

\bibitem{MoreConcise}
J~Peter May and Kate Ponto.
\newblock {\em More concise algebraic topology: localization, completion, and
  model categories}.
\newblock University of Chicago Press, 2011.

\bibitem{May}
JP~May.
\newblock {\em The geometry of iterated loop spaces}.
\newblock Springer Berlin Heidelberg New York, 1972.

\bibitem{Quillen}
Daniel~G Quillen.
\newblock {\em Homotopical algebra}, volume~43.
\newblock Springer, 2006.

\bibitem{Reedy}
Christopher~Leonard Reedy.
\newblock {\em Homology of algebraic theories}.
\newblock 1974.

\bibitem{Schwede}
Stefan Schwede.
\newblock Stable homotopy of algebraic theories.
\newblock {\em Topology}, 40(1):1--41, 2001.

\bibitem{Segal}
Graeme Segal.
\newblock Categories and cohomology theories.
\newblock {\em Topology}, 13(3):293--312, 1974.

\bibitem{sullivan}
Dennis~Parnell Sullivan and Andrew Ranicki.
\newblock {\em Geometric Topology: Localization, Periodicity and Galois
  Symmetry: the 1970 MIT Notes}, volume~8.
\newblock Springer, 2005.

\end{thebibliography}
\end{document}